\newtheorem{theorem}{Theorem}[section]
\newtheorem{definition}[theorem]{Definition}
\theoremstyle{definition}
\newcounter{mnote}
\numberwithin{equation}{section}  %amsmath command: set the counter
\begin{document}

\title[Bifurcation in the Einstein Constraints]
      {Numerical Bifurcation Analysis of Conformal \\
       Formulations of the Einstein Constraints}

\author[M. Holst]{M. Holst}
\email{mholst@math.ucsd.edu}

\author[V. Kungurtsev]{V. Kungurtsev}
\email{vkungurt@math.ucsd.edu}

\address{Department of Mathematics\\
         University of California San Diego\\ 
         La Jolla CA 92093}

\thanks{
PACS Numbers: 04.20.Ex, 02.30.Jr, 02.30.Sa, 04.25D-
}
\thanks{MH was supported in part by NSF Awards~0715146 and 0915220,
and by DOD/DTRA Award HDTRA-09-1-0036.
VK was supported in part by NSF Awards~0715146 and 0915220.}

\date{\today}

\keywords{bifurcation theory, nonlinear PDE, non-uniqueness, solution folds, pitchfork bifurcation, compact operators, Fredholm operators, Fredholm index, homotopy methods, continuation methods, pseudo-arclength continuation, Einstein constraint equations}

\begin{abstract}
The Einstein constraint equations have been the subject of study for more
than fifty years.
The introduction of the conformal method in the 1970's as a parameterization
of initial data for the Einstein equations led to increased interest in the 
development of a complete solution theory for the constraints, with the 
theory for constant mean curvature (CMC) spatial slices and closed manifolds
completely developed by 1995.
The first general non-CMC existence result was establish by Holst 
et al.\ in 2008, with extensions to rough data by Holst et al.\ in 2009, 
and to vacuum spacetimes by Maxwell in 2009.
The non-CMC theory remains mostly open; moreover, recent work of Maxwell 
on specific symmetry models sheds light on fundamental non-uniqueness 
problems with the conformal method as a parameterization in non-CMC settings.
In parallel with these mathematical developments, computational physicists 
have uncovered surprising behavior in numerical solutions to the extended 
conformal thin sandwich formulation of the Einstein constraints.
In particular, numerical evidence suggests the existence of multiple 
solutions with a quadratic fold, and a recent analysis of a simplified
model supports this conclusion.
In this article, we examine this apparent bifurcation phenomena in a
methodical way, using modern techniques in bifurcation theory and in
numerical homotopy methods.
We first review the evidence for the presence of bifurcation in the 
Hamiltonian constraint in the time-symmetric case.
We give a brief introduction to the mathematical framework for
analyzing bifurcation phenomena, and then develop the main ideas
behind the construction of numerical homotopy, or path-following, methods
in the analysis of bifurcation phenomena.
We then apply the continuation software package AUTO to this problem,
and verify the presence of the fold with homotopy-based numerical methods.
We discuss these results and their physical significance, which lead to
some interesting remaining questions to investigate further. 
% Their conclusiveness of these results poses interesting gravitational geometric questions to investigate in future research. 
\end{abstract}

\maketitle

\vspace*{-1.0cm}
{\footnotesize
\tableofcontents
}
\vspace*{-2.0cm}

\clearpage

%%%%%%%%%%%%%%%%%%%%%%%%%%%%%%%%%%%%%%%%%%%%%%%%%%%%%%%%%%%%%%%%%%%%%%%%%%%%%%
\section{Introduction}\label{S:intro}

Einstein's gravitational field equations for relating the space curvature at a time slice to the stress-energy can be split into a set of evolution and constraint equations.
The four constraint equations, known as the (scalar) Hamiltonian constraint and the (3-vector) momentum constraint, constrain the induced spatial metric $g_{ij}$ and extrinsic curvature $K_{ij}$.
The Einstein constraint equations have been the subject of study for more
than fifty years (cf.~\cite{ADM62}).
The introduction of the conformal method in the 1970's as a way of
parameterizing initial data to the Einstein equations led to increased
interest in the development of a complete solution theory for the constraints,
with the theory for constant mean curvature (CMC) spatial slices and closed 
manifolds completely developed by 1995.
The CMC theory on closed manifolds is particular satisfying, with nearly
all physically interesting cases exhibiting both existence and uniqueness
of solutions.

However, other than the near-CMC result of Isenberg and Moncrief 
in 1996~\cite{IsMo96},
the theory for non-CMC solutions remained completely open until the first
far-from-CMC existence result was establish by 
Holst et al.~\cite{HNT07a} in 2008, with
extensions to rough data by Holst et al.\ in 2009~\cite{HNT07b}, 
and to vacuum spacetimes by Maxwell in 2009~\cite{dM09}.
However, the non-CMC theory remains mostly open, and what is known is
much less satisfying than the CMC case; the new non-CMC results of
Holst et al.\ and Maxwell are based on new types of topological fixed
point arguments, and while they establish existence, these arguments do
not give uniqueness.
Moreover, more recent work of Maxwell on specific symmetry models show in
fact that uniqueness is lost, and also sheds light on some fundamental
problems with the conformal method itself as a parameterization of initial
data on manifolds that are not very close to CMC.

Several decompositions of the equations have been formulated in addition
to the conformal method, although the solution theory for only the conformal
method is well-developed (cf.~\cite{BaIs03}).
In particular, in the extended conformal thin sandwich (XCTS) decomposition 
of the constraints, a conformal factor $\psi$, the lapse $N$, and the shift 
$\beta_i$, are solved for in five coupled elliptic equations with supplied 
background data. 
In parallel with the mathematical developments in the theory for the
conformal method, computational physicists have uncovered surprising
behavior in numerical solutions to the XCTS formulation.
In particular, numerical evidence suggests the existence of multiple 
solutions; Pfeiffer and York in~\cite{PY} numerically construct two solutions 
for a specific choice of free data.

In addition, there have been more theoretical results suggesting non-uniqueness. Under additional assumptions of conformal flatness and time-symmetry, the 
Hamiltonian constraint becomes a decoupled equation for the conformal factor.
Baumgarte, Murchadha and Pfeiffer~\cite{BMP} solve this form of the 
Hamiltonian constraints using Sobolev functions, to find that an equation 
parameter has two admissible values consistent with the constraints.
Walsh~\cite{Walsh} performs a Lyapunov-Schmidt analysis (a standard technique
in bifurcation theory) of this form of the Hamiltonian constraint, and
shows that at a critical point with two expected solution branches, the 
solution branches should take the form of a quadratic fold.
In this article, we examine this apparent bifurcation phenomena in a
methodical way, using modern techniques in bifurcation theory and in
numerical homotopy methods.

{\em Outline of the paper.}
The remainder of the paper is structured as follows.
In \S\ref{S:conformal}, give an overview of the constraint equations
in the Einstein system and in particular the XCTS formulation thereof.
In \S\ref{S:theory}, 
we give a brief introduction to the mathematical framework for
analyzing nonlinear operators in general. We build on this in \S\ref{S:bifurtheory} where we expound on the theoretical framework underlying bifurcation analysis.
In \S\ref{S:homotopy}, we develop the main ideas behind the construction of 
numerical homotopy, or path-following, methods in the numerical treatment 
of bifurcation phenomena, and subsequently in \S\ref{S:setup} present the methodology of the numerical techniques we perform for this problem.
In \S\ref{S:numerical},
we apply the continuation software package AUTO to the constraint problem,
and verify the presence of the fold with homotopy-based numerical methods.
We confirm the earlier results, as well as provide a framework for
a more careful exploration of the solution theory for various
parameterizations of the constraint equations.
Analyzing the Hamiltonian constraint for time-symmetric conformally flat 
initial data, as in~\cite{Walsh,BMP}, we demonstrate the existence and 
location of a critical point, evidence that the solution branch at the 
critical point forms a one-dimensional fold, and the form of the solution as continued 
past the critical point. 

%%%%%%%%%%%%%%%%%%%%%%%%%%%%%%%%%%%%%%%%%%%%%%%%%%%%%%%%%%%%%%%%%%%%%%%%%%%%%%
\section{Conformal Thin Sandwich Decomposition}\label{S:conformal}

In General Relativity it is common to look at the curvature of spatial hypersurfaces taken at time-slices of space-time.
The Einstein constraint equations are conditions on the induced spatial metric $g_{ij}$ and the second fundamental form $K_{ij}$ for being a spatial slice.
In addition, there are six evolution equations that govern how this geometric data evolves in a full spacetime.
In solving the equations, a variety of decompositions have been proposed.
In the XCTS formulation, proposed by York~\cite{York}, the shift vector $\beta_i$, the lapse $N$ and a conformal factor $\psi$ are solved for given a set of supplied data that includes $(\tilde{g}_{ij}, \tilde{u}_{ij},K,\partial_t K)$, where $\tilde{g}_{ij}$ is the conformally related induced spatial metric, $\tilde{u}_{ij}$ its time derivative, and $K$ the trace of the extrinsic curvature.
The lapse and shift are formed from taking a level set of $t$ and looking at the normal to the hypersurface $n=N^{-1}(\partial_t-\beta^i \partial_i)$ where $i$ indexes over spatial coordinates~\cite{Isenberg}.
The Hamiltonian constraint of the XCTS equations can be written as
\begin{equation}
\tilde{\nabla}^2\psi-\frac{1}{8}\tilde{R}\psi-\frac{1}{12}K^2\psi^5+\frac{1}{8}\psi^{-7}\tilde{A}^{ij}\tilde{A}_{ij}+2\pi\psi^5\rho=0
\end{equation}
Here $\tilde{\nabla}$ represents the covariant derivative, $\tilde{R}$ the trace of the Ricci tensor and $\tilde{A}$ the trace free part of the extrinsic curvature, all associated with the conformally scaled metric $\tilde{g}_{ij}$.
Following~\cite{BMP,Walsh} we assume time-symmetry (so $K_{ij}=0$) and conformally flat initial data. This constraint then reduces to 
\begin{equation}\label{eq:maineq}
\nabla^2 \psi+2\pi\rho\psi^5=0
\end{equation}
Following~\cite{BMP} we let $\rho$ be the constant mass-density of a star.
Without loss of generality, we take $\rho=0$ outside $r=1$ and look for solutions to \eqref{eq:maineq} with boundary conditions
\begin{align}\label{eq:bc}
\frac{\partial\psi}{\partial r}&=0, \quad r=0 \\
\psi&=1, \quad r=1
\end{align}
We can note that the maximum principle does not apply for this equation and hence uniqueness is not guaranteed.
We will see that this equation has two, one, or no solutions, depending on the value of $\rho$.

%%%%%%%%%%%%%%%%%%%%%%%%%%%%%%%%%%%%%%%%%%%%%%%%%%%%%%%%%%%%%%%%%%%%%%%%%%%%%%
\section{Nonlinear Operators on Banach Spaces}\label{S:theory}

% Note for MH: please look over this section and try to cut out extraneous details / 
% explain important concepts in more detail to address comment #1 from the first reviewer

Let $X$ and $Y$ be Banach spaces,
and
let $X'$ and $Y'$ be their 
respective dual spaces.
Given a (generally nonlinear) map $F \colon X \to Y$,
we are interested in the following general problem:
\begin{equation}
\label{eqn:problem1a}
\mbox{Find}~u \in X ~\mbox{such~that}~ F(u) = 0 \in Y.
\end{equation}
We will give a brief overview of techniques for analyzing 
solutions to~\eqref{eqn:problem1a}, and for characterizing
their behavior with respect to parameters.

We say that the problem ${F(u)=0}$ is {\em well-posed} if there is
(a) existence, (b) uniqueness, and (c) continuous dependence of
the solution on the data of the problem.
Recall that if $F$ is both one-to-one ({\em injective}) 
and onto ({\em surjective}),
it is called a {\em bijection}, 
in which case the inverse mapping $F^{-1}$ exists, and 
we would have both existence and uniqueness of solutions
to the problem ${F(u)=0}$.
Recall that $F\colon X \to Y$ is a continuous map 
from the normed space $X$ to the normed space $Y$ 
if $\lim_{j \to \infty} u_j = u$ implies that
$\lim_{j \to \infty} F(u_j) = F(u)$, where 
$\{ u_j \}$ is a sequence, $u_j \in X$.
If both $F$ and $F^{-1}$ are continuous, 
then $F$ is called a {\em homeomorphism}.
If both $F$ and $F^{-1}$ are differentiable
(see the below for the definition of differentiation 
of abstract operators in Banach spaces), 
then $F$ is called a {\em diffeomorphism}.
If both $F$ and $F^{-1}$ are $k$-times continuously differentiable, 
then $F$ is called a $C^k$-{\em diffeomorphism}.
A linear map between two vector spaces is a type of {\em homomorphism}
(structure-preserving map);
a linear bijection is called an {\em isomorphism}.

We will need to assemble just a few basic concepts involving general nonlinear 
maps in Banach spaces, to provide the mathematical framework for the 
discussions in the next section.
In particular, the following notion of differentiation of maps on Banach
spaces will be required
(see \cite{StHo2011a,Zeid91a} for more complete discussions).
\begin{definition}
Let $X$ and $Y$ be Banach spaces,
let $F\colon X \to Y$,
and let $D \subset X$ be an open set.
Then the map $F$ is called {\em Fr\'{e}chet-} or {\em {\bf F}-differentiable} 
at $u \in D$ if there exists a bounded linear 
operator $F_u(u)\colon X\to Y$ such that:
$$
\lim_{\| h \|_X \to 0} \frac{1}{\| h \|_X} 
   \| F(u+h) - F(x) - F_u(u)(h)\|_Y = 0.
$$
\end{definition}
The bounded linear operator $F_u(u)$ is called the 
{\em {\bf F}-derivative} of $F$ at $u$.
The {\em {\bf F}-derivative} of $F$ at $u$ can again be shown to be unique.
If the {\bf F}-derivative of $F$ exists at all points $u \in D$, then 
we say that $F$ is {\bf F}-differentiable on $D$.
If in fact $D=X$, then we simply say that $F$ is {\bf F}-differentiable, and
the derivative $F_u(\cdot)$ defines a map from $X$ into the space
of bounded linear maps, $F_u\colon X \to \mathcal{L}(X,Y)$.
In this case, we say that $F \in C^1(X;Y)$.
Many of the properties of the derivative of smooth functions over domains
in $\mathbb{R}^n$ carry over to this abstract setting, including
the chain rule: If $X$, $Y$, and $Z$ are Banach spaces, and
if the maps $F\colon X\to Y$ and $G\colon Y\to Z$ are differentiable,
then the derivative of the composition map $H = G\circ F$ also exists, 
and takes the form
$$
H_u(u) = (G\circ F)_u(u) = G_F(F(u)) \circ F_u(u),
$$
where 
$H_u\colon X\to \mathcal{L}(X,Z)$,
$F_u\colon X\to \mathcal{L}(X,Y)$,
and $G_F\colon Y\to \mathcal{L}(Y,Z)$.
Higher order Fr\'{e}chet (and G\^{a}teaux) derivatives can be defined in the 
obvious way, giving rise to multilinear maps and giving meaning to the 
notation $F \in C^k(X;Y)$.
Note that below we will often encounter functions of two 
variables $F(u,\lambda)$, and will be interested in the derivatives
of such functions with respect to each variable; we will denote these
using the consistent notation $F_u$ and $F_{\lambda}$.
See~\cite{AMR88,StHo2011a} for more complete discussions of 
general maps and differentiation in Banach spaces.

A fundamental concept concerning linear operators 
that we will need for the discussions below is that of a 
{\em Fredholm operator}.
Let $X$ and $Y$ be Banach spaces, and let $A \in \mathcal{L}(X,Y)$,
or in other words, $A$ is a bounded linear operator from $X$ to $Y$.
In the case that $X$ and $Y$ have additional Hilbert space structure, wherein there is an inner-product,
the Riesz Representation Theorem implies that the adjoint operator
$A^* \in \mathcal{L}(Y,X)$, defined as the operator for which $(Ax,y)=(x,A^* y)$, exists uniquely.
There are four fundamental subspaces of $X$ and $Y$ associated with $A$
and $A^*$, namely:
\begin{enumerate}
\item $\mathcal{N}(A):$ null space (or {\em kernel}) of $A$
\item $\mathcal{R}(A):$ range space of $A$
\item $\mathcal{N}(A^*):$ null space (or {\em kernel}) of $A^*$
\item $\mathcal{R}(A^*):$ range space of $A^*$
\end{enumerate}
One can consider the dimension (dim) and co-dimension (codim) 
of each of these four spaces, where co-dimension is taken to be
relative to the larger spaces that they are subspaces of.
The operator $A \in \mathcal{L}(X,Y)$ is called a {\em Fredholm operator}
if and only if:
\begin{enumerate}
\item $\mathrm{dim}(\mathcal{N}(A)) < \infty$
\item $\mathrm{dim}(\mathcal{R}(A)) < \infty$
\end{enumerate}
The difference of these two dimensions, namely
\begin{equation}
\mathrm{ind}(A) = \mathrm{dim}(\mathcal{N}(A)) - \mathrm{dim}(\mathcal{R}(A))
\end{equation}
is called the {\em Fredholm index} of $A$.
A basic result about Fredholm operators is the following.
\begin{theorem}
If $A \in \mathcal{L}(X,Y)$ is a Fredholm operator, then
\begin{enumerate}
\item $\mathcal{R}(A)$ is closed.
\item $A^*$ is Fredholm with index $\mathrm{ind}(A^*) = - \mathrm{ind}(A)$.
\item $K$ is a compact operator, then $A+K$ is Fredholm with
      $\mathrm{ind}(A+K) = \mathrm{ind}(A)$.
\end{enumerate}
\end{theorem}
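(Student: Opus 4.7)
The plan is to establish the three claims in the stated order, since each relies on the previous. First, I will prove closedness of $\mathcal{R}(A)$ using the open mapping theorem. Since $\mathcal{N}(A)$ is finite-dimensional it has a closed complement $X_1 \subset X$, so $A$ restricts to a continuous linear injection $A_1 \colon X_1 \to Y$ with $\mathcal{R}(A_1) = \mathcal{R}(A)$. Using that $\mathcal{R}(A)$ has finite codimension, I would pick a finite-dimensional algebraic complement $Z \subset Y$ with $Y = \mathcal{R}(A) \oplus Z$ and define $B \colon X_1 \times Z \to Y$ by $B(x_1,z) = A x_1 + z$. Because $X_1 \times Z$ is a Banach space and $B$ is a continuous bijection, the open mapping theorem makes $B$ a topological isomorphism, and then $\mathcal{R}(A) = B(X_1 \times \{0\})$ is closed as the image of a closed subspace.

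For the second claim, I would appeal to the standard duality between kernels, ranges, and annihilators. Since $\mathcal{R}(A)$ is closed, the Hahn--Banach theorem yields an isometric identification $\mathcal{N}(A^*) \cong \mathcal{R}(A)^\perp \cong (Y/\mathcal{R}(A))^*$, whence $\dim \mathcal{N}(A^*) = \mathrm{codim}\,\mathcal{R}(A) < \infty$. A parallel argument shows $\mathcal{R}(A^*)$ is closed with $\mathrm{codim}\,\mathcal{R}(A^*) = \dim \mathcal{N}(A) < \infty$. Combining these two dimension formulas yields
\begin{equation*}
\mathrm{ind}(A^*) = \dim \mathcal{N}(A^*) - \mathrm{codim}\,\mathcal{R}(A^*) = \mathrm{codim}\,\mathcal{R}(A) - \dim \mathcal{N}(A) = -\mathrm{ind}(A).
\end{equation*}

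For the third claim I would use the parametrix construction underlying Atkinson's theorem. Choose closed complements so that $X = \mathcal{N}(A) \oplus X_1$ and $Y = \mathcal{R}(A) \oplus Z$ with $Z$ finite-dimensional, and let $P$ and $Q$ denote the bounded projections onto $X_1$ along $\mathcal{N}(A)$ and onto $\mathcal{R}(A)$ along $Z$, respectively. By part (1) the restriction $A_1 = A|_{X_1} \colon X_1 \to \mathcal{R}(A)$ is a topological isomorphism, and defining $B \in \mathcal{L}(Y,X)$ by $B = A_1^{-1} \circ Q$ gives a parametrix satisfying $BA = P$ and $AB = Q$, so that $BA - I$ and $AB - I$ are finite-rank, hence compact. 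Then for any compact $K$, both $B(A+K) - I = BK + (P-I)$ and $(A+K)B - I = KB + (Q-I)$ are compact, and standard Atkinson theory gives that $A+K$ is Fredholm.

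The main obstacle will be verifying the index equality $\mathrm{ind}(A+K)=\mathrm{ind}(A)$. My plan is to use the homotopy $A_t := A + tK$ for $t \in [0,1]$; by the argument above each $A_t$ is Fredholm, so it suffices to show that $t \mapsto \mathrm{ind}(A_t)$ is locally constant on $[0,1]$. This, in turn, reduces to the perturbation statement that, for any Fredholm $T$, there exists $\delta > 0$ with $\mathrm{ind}(T+S) = \mathrm{ind}(T)$ whenever $\|S\| < \delta$; this is the technical heart of the argument and I would establish it via the direct sum decomposition used in the parametrix construction, reducing the perturbed operator to a small perturbation of a topological isomorphism between finite-codimensional closed subspaces and then tracking the resulting finite-dimensional changes in kernel and cokernel. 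Once local constancy is in hand, the connectedness of $[0,1]$ forces $\mathrm{ind}(A+K) = \mathrm{ind}(A)$, completing the proof.
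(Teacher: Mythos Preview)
Your outline is correct and follows the standard route one finds in functional analysis texts: open mapping for closedness of the range, annihilator duality for the adjoint, and Atkinson's parametrix plus a homotopy/small-perturbation argument for compact stability of the index. However, the paper does not actually prove this theorem; its entire proof is the one-line citation ``See~\cite{Zeid91a}.'' So you have not reproduced the paper's argument---you have supplied one where the paper defers to a reference. That is entirely appropriate here, and your sketch is essentially what one would find in Zeidler. One small remark: the paper introduces $A^*$ as the Hilbert-space adjoint via the Riesz representation theorem, whereas your part~(2) is phrased in terms of Banach-space annihilators and $(Y/\mathcal{R}(A))^*$; in the Hilbert setting these identifications coincide, so the discrepancy is cosmetic.
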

\begin{proof}
See~\cite{Zeid91a}.
\end{proof}

%%%%%%%%%%%%%%%%%%%%%%%%%%%%%%%%%%%%%%%%%%%%%%%%%%%%%%%%%%%%%%%%%%%%%%%%%%%%%%
\section{Bifurcation Theory for Nonlinear Operators Equations}\label{S:bifurtheory}

Bifurcation Theory studies the branching of solutions as governed by parameter(s). Throughout this section we will refer to a "solution curve" which, for $F(u,\lambda)$ plots points in $(||u||,\lambda)$ space at which $(u,\lambda)$ solves $F(u,\lambda)=0$, and $||u||$ denotes the norm of $u$. Specifically, we are interested in the local behavior of the solution curve in a neighborhood of a known solution $(u_0,\lambda_0)$. This is because, in order to explore the solution space to a certain operator equation $F(u,\lambda)$, we often solve $F(u, \lambda_0)$ for $u$, then obtain another $(u_1, \lambda_1)$ from the original solution data we obtain, and similarly continue to subsequent solutions.

Bifurcation theory has its foundation in the implicit function theorem, 
\begin{theorem}[Implicit Function Theorem]
\label{thm:ift}
If it holds that, for an operator $F:X\times \mathbb{R}\rightarrow Y$, if
$F(u,\lambda)$ with $F(u_0,\lambda_0)=0$,
$F$ and $F_u$ are continuous on some region $U\times V$ with $(x_0,\lambda_0)\in U\times V$, and   
$F_u(u_0,\lambda_0)$ is nonsingular with a bounded inverse, then
there is a unique branch of solutions $(u(\lambda),\lambda))$ in for $\lambda\in V$, e.g.  $F(u(\lambda),\lambda)=0$. Furthermore $u(\lambda)$ is continuous with respect to $\lambda$ in $V$
\end{theorem}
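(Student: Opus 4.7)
The plan is to prove the Implicit Function Theorem by reducing the equation $F(u,\lambda)=0$ to a fixed point problem to which the Banach contraction mapping principle applies, uniformly in the parameter $\lambda$. Concretely, let $L := F_u(u_0,\lambda_0)$, which by hypothesis has a bounded inverse $L^{-1} \in \mathcal{L}(Y,X)$. Define
\begin{equation*}
T(u,\lambda) := u - L^{-1} F(u,\lambda),
\end{equation*}
and observe that $u$ is a fixed point of $T(\cdot,\lambda)$ if and only if $F(u,\lambda)=0$. So the problem becomes one of showing that $T(\cdot,\lambda)$ has a unique fixed point $u(\lambda)$ near $u_0$ for each $\lambda$ near $\lambda_0$, depending continuously on $\lambda$.

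The key computation is the partial derivative $T_u(u,\lambda) = I - L^{-1} F_u(u,\lambda)$, which vanishes identically at $(u_0,\lambda_0)$. Using continuity of $F_u$ on $U\times V$, I would choose a closed ball $\overline{B_r(u_0)} \subset U$ and shrink $V$ so that
\begin{equation*}
\|I - L^{-1} F_u(u,\lambda)\|_{\mathcal{L}(X,X)} \leqslant \tfrac{1}{2}
\quad\text{for all } (u,\lambda) \in \overline{B_r(u_0)}\times V.
\end{equation*}
A standard mean value estimate (or integration along the segment in $X$) then gives $\|T(u,\lambda) - T(v,\lambda)\|_X \leqslant \tfrac{1}{2}\|u-v\|_X$ on $\overline{B_r(u_0)}$, so $T(\cdot,\lambda)$ is a contraction. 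Next, using continuity of $F$ together with $F(u_0,\lambda_0)=0$, I would further shrink $V$ so that $\|L^{-1} F(u_0,\lambda)\|_X \leqslant r/2$ for $\lambda \in V$. Combining these two estimates with the triangle inequality,
\begin{equation*}
\|T(u,\lambda) - u_0\|_X \leqslant \|T(u,\lambda) - T(u_0,\lambda)\|_X + \|T(u_0,\lambda) - u_0\|_X \leqslant \tfrac{1}{2}r + \tfrac{1}{2}r = r,
\end{equation*}
so $T(\cdot,\lambda)$ maps $\overline{B_r(u_0)}$ into itself. The Banach Fixed Point Theorem applied in the complete metric space $\overline{B_r(u_0)}$ then yields, for each $\lambda \in V$, a unique fixed point $u(\lambda) \in \overline{B_r(u_0)}$, which is precisely the unique solution of $F(u,\lambda)=0$ in this ball.

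For continuity of $u(\lambda)$ in $\lambda$, I would exploit the contraction estimate one more time: for $\lambda,\mu \in V$,
\begin{equation*}
\|u(\lambda) - u(\mu)\|_X = \|T(u(\lambda),\lambda) - T(u(\mu),\mu)\|_X \leqslant \tfrac{1}{2}\|u(\lambda)-u(\mu)\|_X + \|L^{-1}\bigl(F(u(\mu),\lambda) - F(u(\mu),\mu)\bigr)\|_X,
\end{equation*}
so $\|u(\lambda)-u(\mu)\|_X \leqslant 2\|L^{-1}\|\cdot\|F(u(\mu),\lambda) - F(u(\mu),\mu)\|_Y$, and continuity of $F$ in $\lambda$ finishes the argument. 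The main obstacle is the careful bookkeeping in step two: one must choose $r$ and $V$ in the right order so that the contraction constant and the self-map property hold simultaneously on the same ball, using only continuity (not differentiability) of $F_u$ and $F$. Everything else is routine manipulation, and the uniqueness in the theorem is built in because it is local uniqueness within the ball $\overline{B_r(u_0)}$ guaranteed by the fixed point theorem.
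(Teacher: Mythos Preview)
Your contraction-mapping argument is correct and is in fact the standard proof of the Implicit Function Theorem in Banach spaces. Note, however, that the paper does not supply its own proof of this theorem: it is stated without proof as classical background at the start of \S\ref{S:bifurtheory}, with the surrounding discussion referring the reader to standard references such as~\cite{Zeid91a}. So there is nothing in the paper to compare against; your proposal simply fills in what the authors chose to cite rather than prove. The only minor remark is that the theorem as stated in the paper is somewhat loosely phrased (the neighborhood $V$ in the conclusion must in general be shrunk relative to the $V$ in the hypothesis), and your proof handles this correctly by explicitly shrinking $V$ twice.
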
 
This theorem states that if the operator $F_u$ is nonsingular at a certain point $(x_0,\lambda_0)$ there is a unique solution $u$ for each $\lambda$ close to $\lambda_0$ on either side of $\lambda_0$ and we can plot a one-dimensional curve in $(||u||, \lambda)$ space through $(||u_0||,\lambda_0)$. With a singular $F_u$, however, such a branch is not guaranteed, suggesting the possibility of two or more such $u(\lambda)$ branches or no solutions for some $\lambda$ in every neighborhood around $\lambda_0$.
The exact form of the branching depends on the dimensions of $F_u(x_0,\lambda_0)$, $F_\lambda(x_0,\lambda_0)$ and whether $F_\lambda(x_0,\lambda_0)\in \mathcal{R}(F_u(x_0,\lambda_0))$.

In the case of a "fold", wherein there is still a one-dimensional path through $(u_0,\lambda_0)$ but the path exists solely on one side of $\lambda_0$ for $\lambda$, we have 
\begin{enumerate}
\item $\mathrm{dim}(\mathcal{N}(F_u(u_0,\lambda_0)))=1$
\item $\mathrm{dim}(\mathcal{N}(F_u(u_0,\lambda_0)^*))=1$, where $F_u(u_0,\lambda_0)^*$ is the adjoint operator of $F_u(u_0,\lambda_0)$ 
\item $F_\lambda(u_0,\lambda_0) \notin \mathcal{R}(F_u(u_0,\lambda_0)$.
\end{enumerate}
It can be shown, under these circumstances, near $(u_0,\lambda_0)$ the continuation of the solution will be of the form
\begin{align}
u(\epsilon) &= u_0 +\epsilon \phi + C_1 \epsilon^2 +O(\epsilon^3) \\
\lambda(\epsilon)&=\lambda_0+C_2 \epsilon^2 + O(\epsilon^3)
\end{align}
where $\phi$ is a basis for the null-space of $F_u(u_0,\lambda_0)$ (see~\cite{Bifur}). Note that there is no linear $\lambda(\epsilon)$ term, so the solution curve, at first approximation, stays at a constant value of $\lambda$ and changes in $u$. 
The specific values of $C_1$ and $C_2$ are based on the Hessian of the operator $F$.
With a fold the $C_2$ will be negative, so the solution curve shows $\lambda$ increase up to a certain critical point $\lambda_c$ then decreases. With $C_1<0$ the actual solution $u$ changes at first-order along $\phi$, the change then dampens in magnitude. Hence with $C_2\neq 0$, the resulting solution curve appears locally as a sideways parabola, and hence called a "simple quadratic fold". 
If, on the other hand, $C_2=0$, we have at $(x_0,\lambda_0)$ a "fold of order m" if $\lambda^{(k)}(\epsilon)=0$ for $k<m$~\cite{Keller}.
In the case of a simple singular point, we have either 
\begin{enumerate}
\item $\text{dim} (\mathcal{N}(F_u(u_0,\lambda_0)))=1$, and
\item $F_\lambda(u_0,\lambda_0) \in \mathcal{R}(F_u(u_0,\lambda_0))$,
\end{enumerate}
or we have
\begin{enumerate}
\item $\text{dim} (\mathcal{N}(F_u(u_0,\lambda_0)))=2$, and
\item $F_\lambda(u_0,\lambda_0) \notin \mathcal{R}(F_u(u_0,\lambda_0))$.
\end{enumerate}
In this case, we have a situation of branch-switching, in which there are are two branches of solutions crossing the point $(u_0,\lambda_0)$~\cite{Keller}.
With high-dimensional null-spaces, the situation becomes increasingly complicated, with multiple branching solutions of various forms.
We illustrate the three representative cases as they would appear in $(||u||, \lambda)$ space in Figure~\ref{fig:paths}.

\begin{figure}[t]
\begin{center}
\includegraphics[scale=0.55]{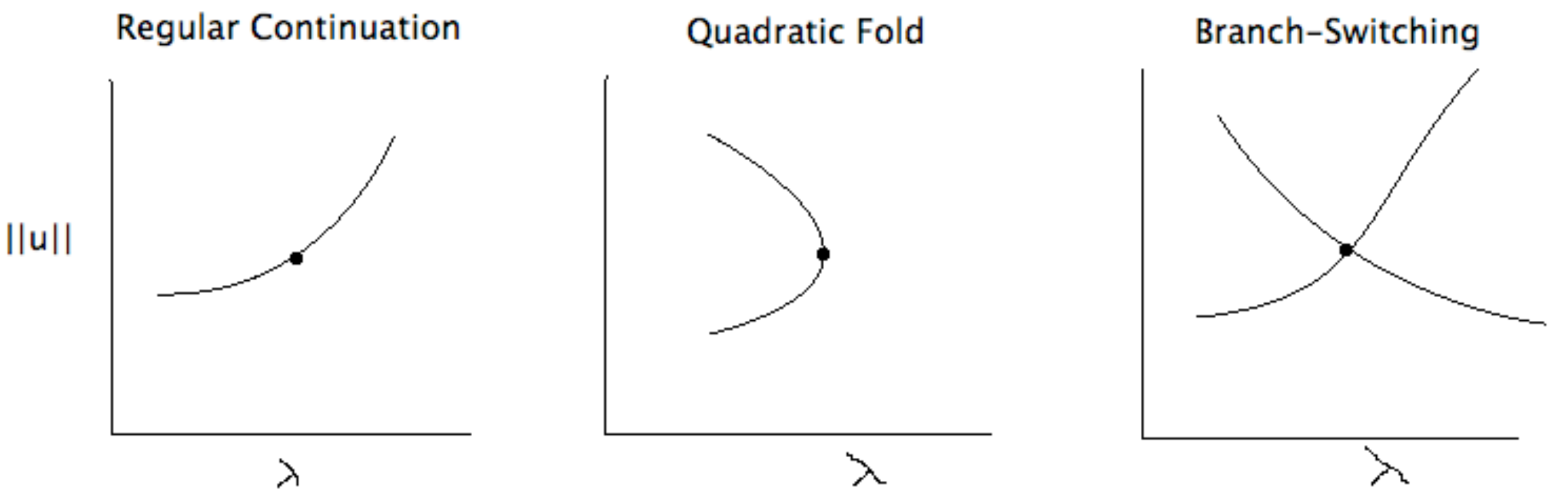}
\end{center}
\caption{\label{fig:paths}Common Locally Bifurcating Solution Paths.}
\end{figure}

We conclude, for completeness, this section with an exposition of the generalized form of the bifurcation analysis method of Lyapunov-Schmidt, as used for this problem by Walsh~\cite{Walsh}. The exposition follows Zeidler~\cite{Zeid91a}.  We assume that
\begin{enumerate}
\item $F_u(u_0,\lambda_0)$ is a Fredholm operator of index $k$
\item $\text{dim}(\mathcal{N}(F_u(u_0,\lambda_0)))=n$
\end{enumerate} 
Then we define projection operators $P:X\rightarrow X$ and $Q:X \rightarrow X$ with $P(X)=\mathcal{N}(F_u(u_0,\lambda_0))$ and $(I-Q)(Y)=\mathcal{R}(F_u(u_0,\lambda_0))$. Now the equation $F(u,\lambda)=0$ is equivalent to the pair
\begin{align}\label{eq:LS}
(I-Q)F(y+z,\lambda)&=0\\
QF(y+z,\lambda)&=0
\end{align}
with $y=(I-P)u$ and $z=Pu$. Now the first equation satisfies the assumptions of the Implicit Function Theorem, and so we can get a unique solution branch $y(z,\lambda)$, substitute the solution in the second equation to obtain the branching equation 
\begin{equation}\label{eq:branch}
QF(y(z,\lambda)+z,\lambda)=0
\end{equation}
then solve for $z(\lambda)$ to get the branch $u=y(z(\lambda),\lambda)+z(\lambda)$
Note that, in practice, one solves \eqref{eq:LS} by expanding the operators in the bases of $\mathcal{N}(F_u(u_0,\lambda_0))$ and $\mathcal{N}(F_u(u_0,\lambda_0)^*)$. Hence, to be constructive, one must already have an estimation of the dimension of the null-space of $F_u(u_0,\lambda_0)$. In particular, Walsh~\cite{Walsh} performs the analysis with the starting assumption that $\text{dim}(\mathcal{N}(F_u(u_0,\lambda_0)))=1$. So while the technique is appropriate once this is known, it is not a standalone method of bifurcation analysis.

%%%%%%%%%%%%%%%%%%%%%%%%%%%%%%%%%%%%%%%%%%%%%%%%%%%%%%%%%%%%%%%%%%%%%%%%%%%%%%
\section{Numerical Bifurcation Theory}\label{S:homotopy}

In the procedure of continuation, we seek to find a solution $u$ to a problem $F(u,\lambda)=0$ as we move along $\lambda$. In the case of a nonsingular Jacobian of the operator ($F_u(u_0,\lambda_0)$), it is standard to apply Newton's method on a discretization of the solution space.
So we perform continuation by repeatedly iterating $\lambda=\lambda+\Delta\lambda$ then resolving $F(u,\lambda)=0$ for $u$.
However, the procedure is invalid in the case of a singular Jacobian of the equation operator, necessitating alternatives for traversing a solution as the parameter varies.
Depending on the form of the bifurcation, various procedures exist.
Considering the finite-dimensional case (e.g. for a discretization of $u$), we have that $F(u,\lambda)$ maps $\mathbb{R}^N\times \mathbb{R}$ to $\mathbb{R}^N$. In this case, the rank of $[F_u,F_\lambda]=N$ if either $F_u$ is nonsingular, which is the case above, or $F_\lambda(u_0,\lambda_0) \notin \mathcal{R}(F_u(u_0,\lambda_0))$. In this case $\text{dim}(\mathcal{N}([F_u,F_\lambda]))=1$. Let's say we have a solution $F(u_0,\lambda_0)=0$. In the latter case, we cannot just set $\lambda_1=\lambda_0+\Delta$ (with a constant $\Delta$) and solve for $u_1$, but we can solve for $F(u_1,\lambda_1)=0$ together with one more scalar equation, which we can set to be a constraint on the total magnitude in the change of $(u,\lambda)$.
With pseudo-arclength continuation, at a certain parameter $\lambda_0$ and solution vector $u_0$, and a direction vector $(\dot{u}_0,\dot{\lambda}_0)$ of the solution branch determined thus far, you run Newton's method on the two equations $F(u_1,\lambda_1)=0$ and $(u_1-u_0)^* \dot{u}_0+(\lambda_1-\lambda_0)\dot{\lambda}-\Delta s=0$, where $\Delta s$ is a constant "arclength" term. 
%Then calculate the next direction vector from $\begin{pmatrix} F_u & F_\lambda \\ \dot{u}_0 & \dot{\lambda}_0\end{pmatrix}\begin{pmatrix} \dot{u}_1 \\ \dot{\lambda}_1 \end{pmatrix} = \begin{pmatrix} 0 \\ 1 \end{pmatrix}$. 
It can be shown that the Newton's method Jacobian matrix is nonsingular if the point is either one at which $F_u$ is nonsingular or a fold~\cite{Bifur}.

If $\text{dim}(\mathcal{N}(F_u))>1$ or  $F_\lambda(u_0,\lambda_0) \in \mathcal{R}(F_u(u_0,\lambda_0))$ then the Jacobian of Newton's method in the pseudo-arclength continuation equations is singular as well, and other procedures must be used to continue the solution. 
 Methods of continuation usually involve constructive techniques, wherein the nullspace vectors for $F_u$ and the solution $F_u \phi_r=F_\lambda$ are explicitly found and coefficients constructed (for a list of relevant algorithms, see~\cite{Keller}). With even higher-dimensional null-spaces, this framework is generalized. 

%%%%%%%%%%%%%%%%%%%%%%%%%%%%%%%%%%%%%%%%%%%%%%%%%%%%%%%%%%%%%%%%%%%%%%%%%%%%%%
\section{Setup for Hamiltonian Constraint Bifurcation}\label{S:setup}

We reprint the equations \eqref{eq:maineq}--\eqref{eq:bc} here
\begin{align}
\nabla^2 \psi+2\pi\rho\psi^5 &=0
\label{eq:maineq2}
\\
\frac{\partial\psi}{\partial r} &=0, \quad r=0 
\label{eq:bc2}
\\
\psi &=1, \quad r=1
\end{align}
and perform a reduction for bifurcation analysis following~\cite{Bifur}. 

The goal is to transform this equation into a form wherein its linearization becomes an eigenvalue problem. This can be done by parameterizing an extra boundary condition. The equation can be rewritten in an equivalent form as 
\begin{align}\label{eq:bvp}
\nabla^2 \psi+2\pi\rho\psi^5&=0 \\
\psi(0)&=p \\
\frac{\partial\psi}{\partial r}(0)&=0
\end{align}
Now we seek to solve $F(p,\rho)\equiv \psi(1,p,\rho)-1=0$. Writing $\psi_p(t,p,\rho)=\frac{d\psi}{dp}(t,p,\rho)$, $F_p(p,\rho)=\psi_p(1,p,\rho)$ and similarly $F_\rho(p,\rho)=\psi_\rho(1,p,\rho)$
Now $\psi_p$ satisfies
\begin{align}\label{eq:bvpp}
\nabla^2 \psi_p+10\pi\rho\psi^4\psi_p&=0 \\
\psi_p(0)&=1 \\
\frac{\partial\psi_p}{\partial r}(0)&=0
\end{align}
Which is an eigenvalue problem with a distinct solution. Define $a(p,\lambda)=\psi_p(1,p,\rho)=F_p(p,\rho)$. Similarly $\psi_\rho$ satisfies
\begin{align}\label{eq:bvpr}
\nabla^2 \psi_\rho+2\pi\psi^5+10\pi\rho\psi^4\psi_\rho&=0 \\
\psi_\rho(0)&=0 \\
\frac{\partial\psi_\rho}{\partial r}(0)&=0
\end{align}
and likewise define $b(p,\rho)=\psi_\rho(1,p,\rho)=F_\rho(p,\rho)$.
Now we have $F_{(p,\rho)}=(a,b)$, So $N(F_{(p,\rho)}))$ can be
$\begin{pmatrix}1 \\ 0 \end{pmatrix}$, $\begin{pmatrix}0 \\ 1 \end{pmatrix}$,
both, or neither. Let us discuss each of these cases separately.
If the null-space is empty, then we have a nonsingular Jacobian of the operator, so continuation can proceed with Newton's method as standard. 
If the null-space is 
$\begin{pmatrix}0 \\ 1 \end{pmatrix}$, 
so $b(p,\rho)=0$, then this is a situation where the same solution exists for a differential increase in $\rho$, which is also an uninteresting case. 
If the null-space is 
$\begin{pmatrix}1 \\ 0 \end{pmatrix}$, 
so $a(p,\rho)=0$ and $b(p,\rho)\neq 0$, we now have a case where $N(F_p)=1$ but $F_\rho \notin F_p$, so we have a fold. Pseudo-arclength continuation must be performed, and the form of the continuation gleamed from the coefficients. 
Finally, in the case of a full two-dimensional null-space, we have reached another solution branch. Here, there are two solution branches intersecting, and so we have a choice between three directions in the continuation of the solution.
We now seek to perform this analysis numerically to locate any critical points for \eqref{eq:maineq2} and see one of these scenarios occurs.

%%%%%%%%%%%%%%%%%%%%%%%%%%%%%%%%%%%%%%%%%%%%%%%%%%%%%%%%%%%%%%%%%%%%%%%%%%%%%%
\section{Numerical Results}\label{S:numerical}

Recall that a continuation procedure attempts to solve $F(u,\lambda)$ for varying $\lambda$ by a series of successive iterations along discrete steps of $\lambda$. The procedure begins at some $\lambda_0$, finds the solution $F(u_0,\lambda_0)$, finds the nullspace information of $F_u(u_0, \lambda_0)$ and $F_\lambda(u_0,\lambda_0)$, then using this information performs a suitable continuation step to find the next solution $F(u_1,\lambda_1)$.
We used the continuation software AUTO to trace the solutions to \eqref{eq:maineq} with boundary conditions \eqref{eq:bc}. AUTO (\cite{AUTO}) performs numerical continuation on ODEs (notice that due to symmetry the differential equation becomes an ODE with just variable $r$). AUTO calculates the dimension of the null-space of the Jacobian of the differential operator and performs, as required, Newton's method, pseudo-arclength continuation or explicit calculation of the null-space directions if it identifies higher-order bifurcations, as discussed in (\ref{S:homotopy}). It uses the bordering algorithm for the case of pseudo-arclength continuation (\cite{Bifur}) and otherwise more intricate linear algebraic algorithms for higher-dimensional bifurcations.

\begin{figure}[t]
\begin{center}
\includegraphics[scale=0.90]{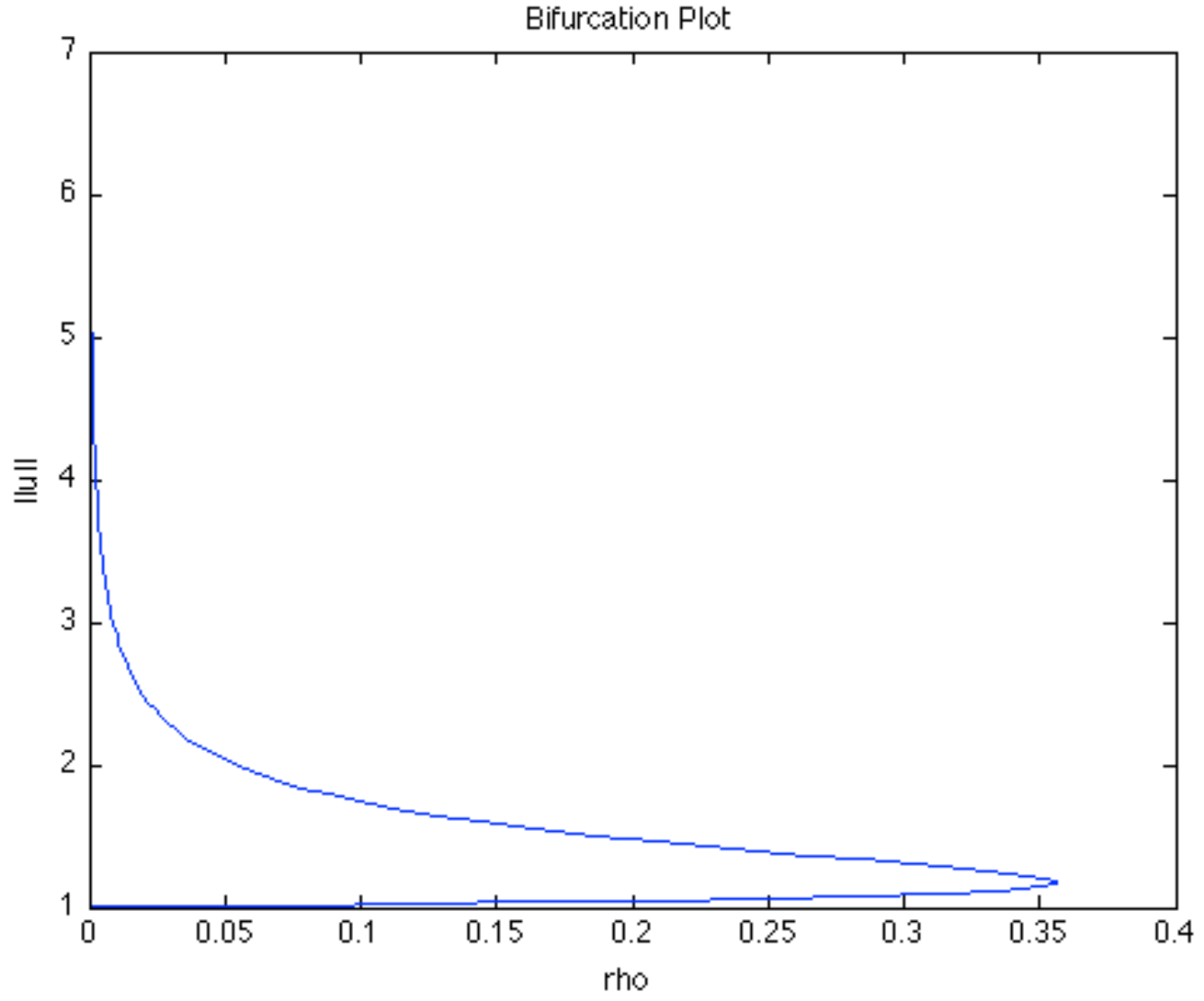}
\end{center}
\caption{Solution curve for \eqref{eq:maineq2}.}
\end{figure}

Continuation reveals a quadratic fold at a value of $\rho$ around $\rho_c \approx 0.35$. For $\rho <\rho_c$ there are two solutions to \eqref{eq:maineq}, at $\rho=\rho_c$, there is exactly one solution, and at $\rho>\rho_c$ there are no solutions. 
AUTO finds that at $\rho=\rho_c$ the nullspace of the discrete Jacobian differential operator has a dimension of one. In addition, it finds that the 2nd derivative information indicates that the continuation has a quadratic form. At all other points along the solution curve, the Jacobian is nonsingular.
Now we let the exponent vary. Writing the equation as $\nabla^2 \psi+2\pi\rho\psi^a=0$, we investigate the continuation for different values of $a$. Knowing that the Laplacian operator on its own has an invertible Jacobian, we expect a fold to appear for some value of $a$. We find that indeed, with $a>1$ a fold appears, the curvature of the solution continuation becoming sharper with increasing $a$. 
In Figure~\ref{fig:bifall} we see the bifurcation diagram for $a=1,1.25,5,10$.
\begin{figure}[tbh]
\begin{center}
\includegraphics[scale=0.68]{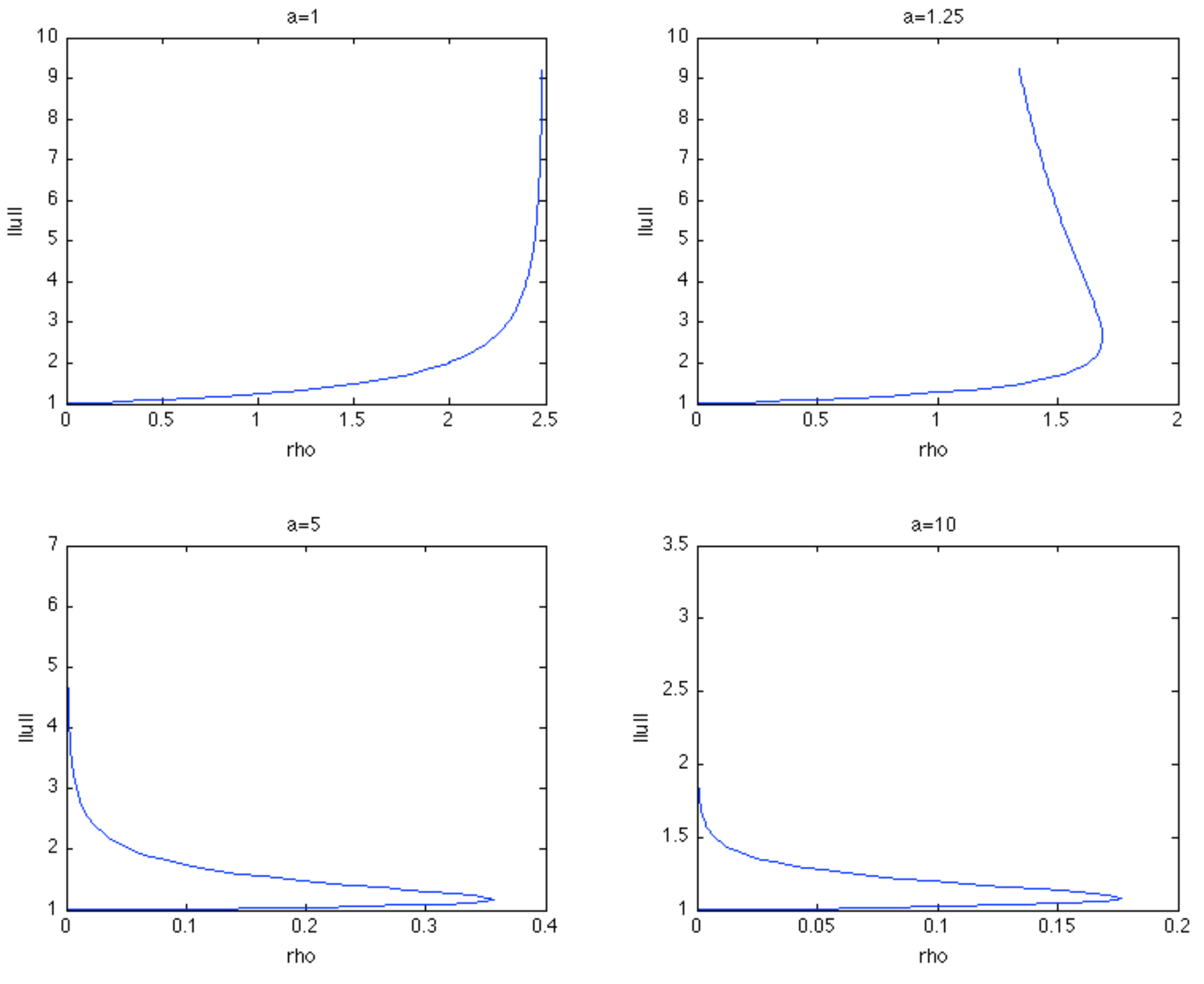}
\end{center}
\caption{\label{fig:bifall}Bifurcation diagram for $a=1,1.25,5,10$.}
\end{figure}
We'd like to note that, in contrast to the previous numerical evidence for non-uniqueness~\cite{PY}, rather than looking at the form of the solution curve and making an educated guess at the presence of a second solution curve, we confirm that there is one and \textit{only one} additional solution curve which connects to the primary solution set in a quadratic curve and there are no additional branches or any solutions past $\rho_c$.

Now we look at two representative solutions from the two branches. Taking the two solutions at $\rho=0.2$ for the original problem $a=5$, we see
\begin{figure}[tbh]
\begin{center}
\includegraphics[scale=0.68]{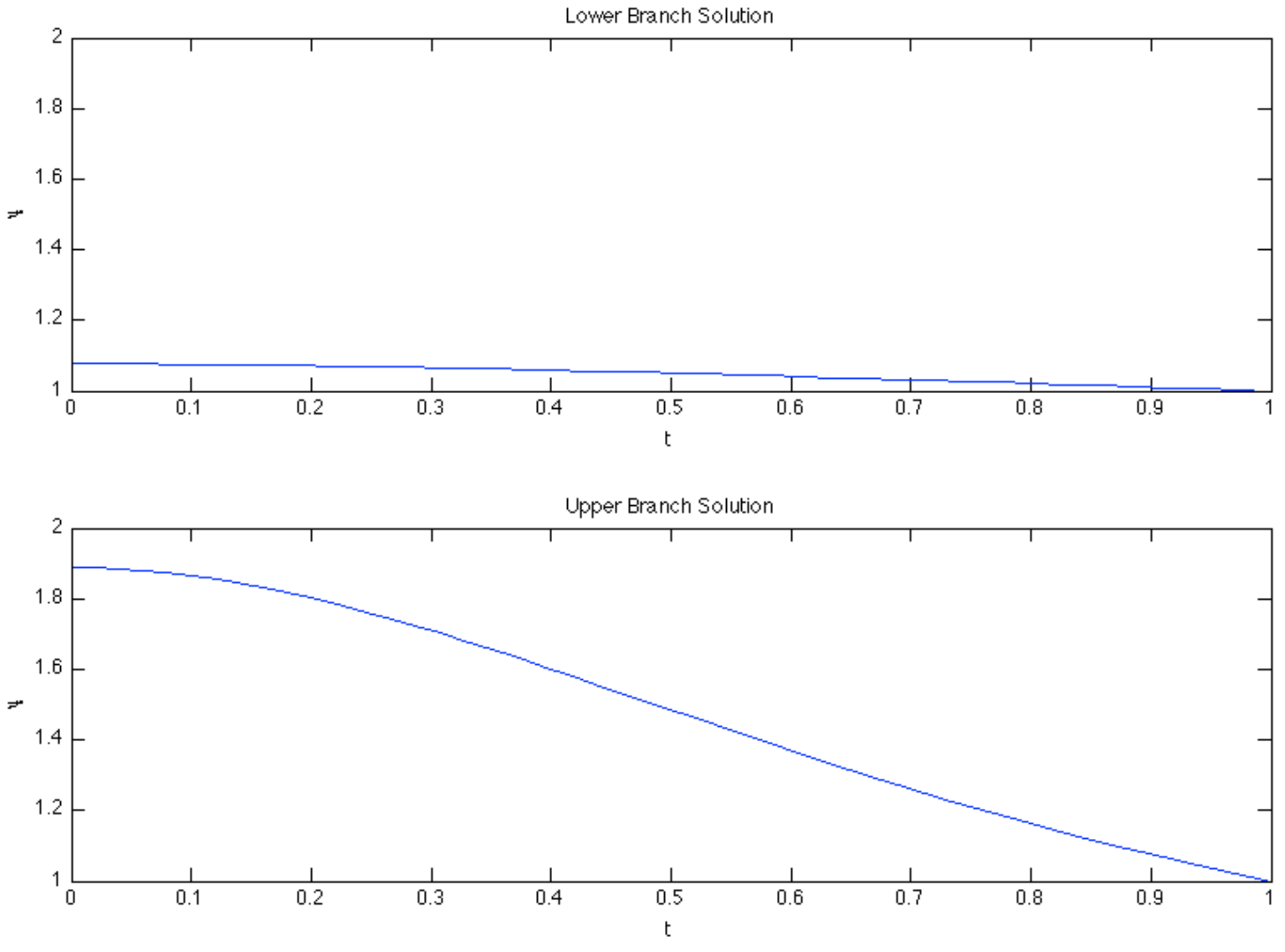}
\end{center}
\caption{Solutions for $a=5$, $\rho=0.2$.}
\end{figure}
We note that this confirms the results of ~\cite{BMP}, as the solutions have the form of a Sobolev function and furthermore the conformal factor is considerably larger, suggesting a higher ADM energy, for the upper branch solutions. 

%%%%%%%%%%%%%%%%%%%%%%%%%%%%%%%%%%%%%%%%%%%%%%%%%%%%%%%%%%%%%%%%%%%%%%%%%%%%%%
\section{Discussion}\label{S:disc}
The Conformal Thin Sandwich approach has been a promising formulation of the Einstein constraints. The XCTS method appears to have the benefit of more physically intuitive and less computationally onerous specification of free data In contrast to its namesake predecessors, the conformal method and thin sandwich approach. In particular, you save the task of having to specify a divergence-free part of a symmetric tracefree tensor in the conformal method. As such it has been used extensively in the construction of the geometry of binary neutron stars and black holes (see the bibliography 5-12 in ~\cite{PY}). As such this paper provides a point of caution for numerical relativists.

As Baumgarte et al. ~\cite{BMP} noted, the upper branch corresponds to a higher level of ADM energy than the lower branch in this model of a constant density black hole. As such, in most applications with this set of asymptotically flat, spherically symmetric free data, the lower branch should be chosen as the correct, physically representative solution, but numerical relativists should be aware that a solver could reach either one and be able to identify the need to switch to the other branch. In addition, as $\rho \rightarrow \rho_c$ standard numerical procedures will become increasingly ill-conditioned. Pinning the solution to the expected branch is recommended as a solution strategy.

Finally, it should be noted that since this bifurcation analysis is complete, rather than simply constructive, and so demonstrates conclusively that 1) there are exactly two solutions for $\rho <\rho_c$ and 2) there are no solutions for $\rho >\rho_c$ for this choice of free data. The fact that the same choice of free data is consistent with two qualitatively disparate geometries suggests future investigation in the relation between the structure in the free and dependent set of data. In addition, it would be an interesting investigation as to why a higher density is inconsistent with time-symmetry, conformal flatness and spherical symmetry.  

%%%%%%%%%%%%%%%%%%%%%%%%%%%%%%%%%%%%%%%%%%%%%%%%%%%%%%%%%%%%%%%%%%%%%%%%%%%%%%
\section{Conclusion}\label{S:conc}
In this article, we have examined the apparent bifurcation phenomena 
in the XTCS formulation of the Einstein constraints in a methodical way, 
using modern techniques in bifurcation theory and in numerical 
homotopy methods.
We first gave an overview of the Einstein constraints in
\S\ref{S:conformal}, and followed this in \S\ref{S:theory}
with a brief introduction to the mathematical foundations for, and in \S\ref{S:bifurtheory} the framework for
analyzing bifurcation phenomena in nonlinear operators equations.
In \S\ref{S:homotopy}, we developed the main ideas behind the construction of 
numerical homotopy, or path-following, methods in the numerical treatment 
of bifurcation phenomena, and in \S\ref{S:setup} we presented the set up of and in \S\ref{S:numerical}
we applied the continuation software package AUTO to the constraint problem.
We verified the presence of the fold with homotopy-based numerical methods,
confirming the earlier results.
Analyzing the Hamiltonian constraint for time-symmetric conformally flat 
initial data, as in~\cite{Walsh,BMP}, we demonstrated the existence and 
location of a critical point, evidence that the solution branch at the 
critical point is one-dimensional, and the form of the solution as continued 
past the critical point is a simple quadratic fold. We confirm Walsh's~\cite{Walsh} constructive Lyapunov-Schmidt analysis by showing numerically that there is indeed a solution point at which there is a one-dimensional kernel for the linearization of the differential equation operator, justifying the assumption made in his analysis, as well as confirming numerically that indeed the form of the solution continuation expansion coefficients are such that this equation exhibits a quadratic fold rather than branching or higher-order folds.

The techniques presented here can be viewed as providing a framework for
a more careful exploration of the solution theory for various
parameterizations of the constraint equations, as well as the geometric relationship between the free data in the model problem investigated.

%%%%%%%%%%%%%%%%%%%%%%%%%%%%%%%%%%%%%%%%%%%%%%%%%%%%%%%%%%%%%%%%%%%%%%%%%%%%%%
\section{Acknowledgments}
   \label{sec:ack}

MH would like to express his appreciation to Herb Keller for introducing 
him to the continuation methodology and techniques used in the paper, 
while MH was a postdoc with Professor Keller at Caltech.
Professor Keller was very enthusiastic about eventually doing a careful 
numerical bifurcation analysis of the Einstein constraint equations.
MH would also like to thank Kip Thorne for the many conversations about
mathematical and numerical general relativity.

MH was supported in part by the NSF through Awards~0715146 and 0915220,
and by DOD/DTRA through Award HDTRA-09-1-0036.
VK was supported in part by NSF Award~0715146 and 0915220.

\vspace*{-0.2cm}
\bibliographystyle{abbrv}
\bibliography{refs,../bib/refs_hoku,../bib/papers,../bib/books,../bib/mjh}
\vspace*{0.2cm}

\end{document}